\newcommand{\ignore}[1]{\relax}
\newtheorem{thm}{Theorem}
\newtheorem*{thm*}{Theorem}
\newtheorem{lem}[thm]{Lemma}
\theoremstyle{definition}
\newtheorem*{notn*}{Notation}
\newtheorem*{hyp*}{Hypothesis}
\newtheorem*{rem*}{Remark}
\numberwithin{equation}{section}
\newcommand{\midtext}[1]{\quad\text{#1}\quad}
\renewcommand{\and}{\midtext{and}}
\renewcommand{\epsilon}{\varepsilon}
\DeclareMathOperator*{\spn}{span}
\DeclareMathOperator*{\clspn}{\overline{\spn}}
\newcommand{\id}{\text{\textup{id}}}
\renewcommand{\iff}{\ensuremath{\Leftrightarrow}}
\newcommand{\then}{\ensuremath{\Rightarrow}}
\newcommand{\cs}%
{\ensuremath{\mathbf{C^*}}}
\newcommand{\csnd}%
{\ensuremath{\cs\!\!_\mathbf{nd}}}
\newcommand{\coact}%
{\ensuremath{\mathbf{C^*coact}}}
\newcommand{\coactnd}%
{\ensuremath{\mathbf{Coact}_\mathbf{nd}}}
\newcommand{\coactn}%
{\ensuremath{\coact^\mathbf{n}}}
\newcommand{\coactnnd}%
{\ensuremath{\coactn_\mathbf{nd}}}
\newcommand{\coactm}%
{\ensuremath{\coact^\mathbf{m}}}
\newcommand{\coactmnd}%
{\ensuremath{\coactm_\mathbf{nd}}}
\begin{document}
\title[Erratum to ``Full and reduced $C^*$-coactions'']{Erratum to ``Full and reduced $C^*$-coactions''.  Math.\ Proc.\ Camb.\ Phil.\ Soc.\ \textbf{116} (1994), 435--450.}

\author[Kaliszewski]{S. Kaliszewski}
\address{School of Mathematical and Statistical Sciences
\\Arizona State University
\\Tempe, Arizona 85287}
\email{kaliszewski@asu.edu}

\author[Quigg]{John Quigg}
\address{School of Mathematical and Statistical Sciences
\\Arizona State University
\\Tempe, Arizona 85287}
\email{quigg@asu.edu}

\subjclass[2000]{Primary  46L05}

\keywords{full coaction, reduced coaction, nondegeneracy, normalization, crossed product duality}

\date{\today}

\maketitle

Proposition~2.5 of \cite{fullred} states that a full coaction of a locally compact group
on a $C^*$-algebra is nondegenerate if and only if its normalization is.
Unfortunately,
the proof there only addresses the forward
implication,
and we have not been able to find a proof of the opposite implication.
This issue is important because 
the theory of crossed-product duality for coactions
requires implicitly that the coactions involved
be nondegenerate.
Moreover, each type of coaction
--- full, reduced, normal, maximal, and (most recently) exotic ---
has its own distinctive properties with respect
to duality, making it crucial
to be able to convert from one to the other without
losing nondegeneracy.

While it is generally believed that
all coactions of all types are nondegenerate, 
in this note we summarize what little is actually known about 
nondegeneracy of $C^*$-coactions.  We also hope to 
caution the reader that the error in 
\cite{fullred} has propagated
widely, and sometimes invisibly, in the literature. 
For example,
a \emph{normal} coaction is nondegenerate if and only if
the associated \emph{reduced} coaction is nondegenerate
(\cite[Proposition~3.3]{fullred}, which is independent
of Proposition~2.5).  
So since {reduced} coactions of \emph{discrete}
groups are automatically nondegenerate (\cite[Corollaire~7.15]{bs:hopf}),
it is often mistakenly assumed 
(as in \cite{Qdiscrete} and \cite{enchilada})
that every \emph{full} coaction of a {discrete} group is also nondegenerate.
An equivalent assumption 
(as in \cite[Section~2.4]{ks})
is that every $C^*$-algebra that carries a coaction of a discrete group
is the closed span of its spectral subspaces.

An overview of the definitions and basic results concerning 
full coactions,  their normalizations, and 
their reductions can be found in Appendix~A of \cite{enchilada}.

\begin{thm}\label{correct}
Let $(A,\delta)$ be a full coaction  of a locally compact group~$G$.
Then among the following conditions, we have the implications
\textup{(1)}~$\then$~\textup{(2)}~\textup{$\iff$~(3)}\textup{:}

\begin{enumerate}
\item $(A,\delta)$ is nondegenerate.
\item The normalization $(A^n,\delta^n)$ is nondegenerate.
\item The reduction $(A^r,\delta^r)$ is nondegenerate.
\end{enumerate}
\end{thm}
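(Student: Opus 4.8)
The plan is to obtain both implications from results of \cite{fullred} that are logically \emph{independent} of the flawed Proposition~2.5, using the normalization as the bridge between conditions \textup{(2)} and \textup{(3)}. The implication \textup{(1)}~$\then$~\textup{(2)} is precisely the forward half of \cite[Proposition~2.5]{fullred}, whose proof there is correct and needs no repair: the normalization map $A\to A^n$ is a surjective, equivariant $*$-homomorphism, so it carries $\delta(A)(1\otimes\csg)$ onto a dense subset of $A^n\otimes\csg$, and nondegeneracy of $\delta$ forces $\delta(A)(1\otimes\csg)$ to be dense in $A\otimes\csg$, whence $(A^n,\delta^n)$ is nondegenerate. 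Nothing new is required here.

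For \textup{(2)}~$\iff$~\textup{(3)} the key point is that the reduction $(A^r,\delta^r)$ of the full coaction $(A,\delta)$ is equivariantly isomorphic to the reduced coaction associated with the \emph{normal} coaction $(A^n,\delta^n)$; concretely, the reduction map $A\to A^r$ and the normalization map $A\to A^n$ have the same kernel, so $A^r$ may be identified with $A^n$ and, under this identification, $\delta^r=(\id\otimes\lambda)\circ\delta^n$, where $\lambda\colon\csg\to\csrg$ is the regular representation. This is built into the construction of normalizations and reductions of full coactions, and should be quoted from \cite[Appendix~A]{enchilada} rather than re-derived. Granting it, \textup{(2)}~$\iff$~\textup{(3)} is exactly \cite[Proposition~3.3]{fullred} applied to $(A^n,\delta^n)$: a normal coaction is nondegenerate if and only if its associated reduced coaction is.

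I expect the only step needing genuine care to be the identification in the previous paragraph; everything else is citation. For the implication \textup{(2)}~$\then$~\textup{(3)} one can in any case argue directly: since $\id\otimes\lambda\colon A^n\otimes\csg\to A^n\otimes\csrg$ is a surjective $*$-homomorphism intertwining $\delta^n$ and $\delta^r$ and sending $1\otimes c$ to $1\otimes\lambda(c)$, it maps $\overline{\delta^n(A^n)(1\otimes\csg)}=A^n\otimes\csg$ onto a dense subalgebra of $A^n\otimes\csrg$, which gives $\overline{\delta^r(A^r)(1\otimes\csrg)}=A^r\otimes\csrg$. The reverse implication \textup{(3)}~$\then$~\textup{(2)} is the substantive one --- it has to recover a spanning statement over $\csg$ from one over $\csrg$ --- and it is precisely this passage, supplied by \cite[Proposition~3.3]{fullred}, that is \emph{not} available between a general full coaction and its reduction, which is exactly why the converse of \cite[Proposition~2.5]{fullred} is not included in the theorem.
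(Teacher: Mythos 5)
Your proposal is correct and follows essentially the same route as the paper: the implication (1)~$\then$~(2) via the surjective equivariant quotient map onto the normalization (the paper isolates this as a short lemma rather than citing the forward half of \cite[Proposition~2.5]{fullred}, but the argument is identical), and (2)~$\iff$~(3) by identifying $(A^r,\delta^r)$ with the reduction of $(A^n,\delta^n)$ and invoking \cite[Proposition~3.3]{fullred}. Your closing remarks --- that (3)~$\then$~(2) is the substantive direction and that this passage is unavailable between a general full coaction and its reduction --- accurately locate why the converse of \cite[Proposition~2.5]{fullred} remains open.
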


\begin{lem}\label{quotient}
Let $(A,\delta)$ and $(B,\epsilon)$ be full coactions of a locally compact group $G$.
If $(A,\delta)$ is nondegenerate 
and there exists a $\delta-\epsilon$ equivariant surjection $\varphi:A\to B$,
then $(B,\epsilon)$ is also nondegenerate.
\end{lem}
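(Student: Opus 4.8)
The plan is to push the defining nondegeneracy equation for $\delta$ forward along the homomorphism $\psi:=\varphi\otimes\id_{\csg}$. Recall that $(A,\delta)$ being nondegenerate means exactly $\bar{\delta(A)(1\otimes\csg)}=A\otimes\csg$, and that what must be shown is the analogous equality $\bar{\epsilon(B)(1\otimes\csg)}=B\otimes\csg$; I will also use the standard fact (see the appendix of \cite{enchilada}) that $\delta(A)(1\otimes\csg)$ lies in $A\otimes\csg$ itself, not merely in the multiplier algebra. The first step is to record the elementary properties of $\psi$. The tensor product is functorial for surjections --- and here it is only surjectivity, not injectivity, that is at issue, so no exactness hypothesis is needed, the point being simply that the image of a $*$-homomorphism between $C^*$-algebras is a $C^*$-subalgebra and is in this case dense --- so $\psi\colon A\otimes\csg\to B\otimes\csg$ is a surjective $*$-homomorphism; being surjective it is nondegenerate, hence extends to a unital, and therefore norm-contractive, $*$-homomorphism $M(A\otimes\csg)\to M(B\otimes\csg)$ that fixes each multiplier $1\otimes c$ with $c\in\csg$.

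Next I would bring in equivariance: by definition $\psi\circ\delta=\epsilon\circ\varphi$ as maps $A\to M(B\otimes\csg)$, so multiplying on the right by $1\otimes c$ and using that $\psi$ fixes $1\otimes\csg$ gives $\psi\bigl(\delta(a)(1\otimes c)\bigr)=\epsilon(\varphi(a))(1\otimes c)$ for all $a\in A$ and $c\in\csg$. Since $\varphi$ is onto, this reads
\[
\psi\bigl(\delta(A)(1\otimes\csg)\bigr)=\epsilon(B)(1\otimes\csg).
\]
In particular $\epsilon(B)(1\otimes\csg)=\psi\bigl(\delta(A)(1\otimes\csg)\bigr)\subseteq\psi(A\otimes\csg)=B\otimes\csg$, so that $\bar{\epsilon(B)(1\otimes\csg)}\subseteq B\otimes\csg$. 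For the reverse inclusion, apply $\psi$ to the nondegeneracy equation for $\delta$ and use that $\psi$ is norm-continuous:
\[
B\otimes\csg=\psi(A\otimes\csg)=\psi\bigl(\bar{\delta(A)(1\otimes\csg)}\bigr)\subseteq\bar{\psi\bigl(\delta(A)(1\otimes\csg)\bigr)}=\bar{\epsilon(B)(1\otimes\csg)}.
\]
Combining the two inclusions, $\bar{\epsilon(B)(1\otimes\csg)}=B\otimes\csg$, that is, $(B,\epsilon)$ is nondegenerate. (Applied to the canonical equivariant surjection $A\to A^n$ this is precisely the implication \textup{(1)}~$\then$~\textup{(2)} of \thmref{correct}.)

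I do not expect a real obstacle: once the setup is in place the argument is nothing more than continuity and surjectivity of $\psi$. The points that need attention are all bookkeeping --- verifying that $\psi$ extends to the multiplier algebras and fixes $1\otimes\csg$, so that the equivariance identity may be multiplied by $1\otimes c$ within $M(B\otimes\csg)$; and invoking the standard containment $\delta(A)(1\otimes\csg)\subseteq A\otimes\csg$. One could instead take all closures in $M(A\otimes\csg)$ and $M(B\otimes\csg)$ throughout and appeal at the very end to the fact that $\bar{\epsilon(B)(1\otimes\csg)}$ is automatically a $C^*$-subalgebra of $B\otimes\csg$, which would make even that last citation unnecessary.
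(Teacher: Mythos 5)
Your argument is correct and is essentially the proof the paper gives: both push the nondegeneracy relation $\clspn\delta(A)(1\otimes C^*(G))=A\otimes C^*(G)$ forward through $\varphi\otimes\id$, using equivariance to identify $(\varphi\otimes\id)(\delta(A))$ with $\epsilon(B)$ and surjectivity plus the closed range of $*$-homomorphisms to exchange the closed span with the image. Your version merely makes explicit the multiplier-algebra bookkeeping and splits the paper's single chain of equalities into two inclusions.
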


\begin{proof}
By equivariance,
$\epsilon(B) = \epsilon(\varphi(A)) = (\varphi\otimes\id)(\delta(A))$, so
\begin{align*}
\clspn \epsilon(B)(1\otimes C^*(G))
&=\clspn (\varphi\otimes\id)(\delta(A))(1\otimes C^*(G))\\
&=\clspn (\varphi\otimes\id)\bigl(\delta(A)(1\otimes C^*(G))\bigr)\\
&=(\varphi\otimes\id)\bigl(\clspn \delta(A)(1\otimes C^*(G))\bigr).
\end{align*}
Since $(A,\delta)$ is nondegenerate, 
$\clspn \delta(A)(1\otimes C^*(G) = A\otimes C^*(G)$, so
\[
\clspn \epsilon(B)(1\otimes C^*(G))
=(\varphi\otimes\id)(A\otimes C^*(G))
=B\otimes C^*(G).
\]
Thus $(B,\epsilon)$ is nondegenerate as well.
\end{proof}

\begin{proof}[Proof of Theorem~\ref{correct}]
Since $A^n$ is by definition
a quotient of $A$, and $\delta^n$ is defined
so that the quotient map is $\delta-\delta^n$ equivariant,
(1)~$\then$~(2) is immediate from Lemma~\ref{quotient}.  

By Definition~3.5 of~\cite{fullred}, 
the reduction $(A^r,\delta^r)$ 
of an arbitrary coaction $(A,\delta)$ coincides 
with the reduction $((A^n)^r,(\delta^n)^{r})$ 
of the normalization $(A^n,\delta^n)$.
By Proposition~3.3 of~\cite{fullred},
$(A^n,\delta^n)$ is nondegenerate
if and only if $((A^n)^r,(\delta^n)^{r})$ is,
and (2)~$\iff$~(3) follows.  
\end{proof}

We reiterate that it
is still an open question
whether or not (2)~implies (1).
We are grateful to Iain Raeburn for pointing out the error
in the original proof,
and to Alcides Buss for drawing our attention to 
a problem with our initial attempt to correct~it.


\providecommand{\bysame}{\leavevmode\hbox to3em{\hrulefill}\thinspace}
\providecommand{\MR}{\relax\ifhmode\unskip\space\fi MR }
\providecommand{\MRhref}[2]{%
  \href{http://www.ams.org/mathscinet-getitem?mr=#1}{#2}
}
\providecommand{\href}[2]{#2}

\end{document}